\newtheorem{theorem}{Theorem}[section]
\newtheorem{lemma}[theorem]{Lemma}
\newtheorem{proposition}[theorem]{Proposition}
\newtheorem{corollary}[theorem]{Corollary}   
\newtheorem{definition}[theorem]{Definition}
\newtheorem{remark}[theorem]{Remark}
\newtheorem{question}[theorem]{Question}
\numberwithin{equation}{section}
\begin{document}

\title[The $\mathrm{v}$-number and regularity of cover ideals of graphs]{The $\mathrm{v}$-number and Castelnuovo-Mumford regularity of cover ideals of graphs}
\author{
Kamalesh Saha
}
\date{}

%Author1
\address{\small \rm Chennai Mathematical Institute, Siruseri, Chennai, Tamil Nadu 603103, India}
\email{ksaha@cmi.ac.in}

%Author2
%\address{\small \rm Chennai Mathematical Institute, Siruseri, Chennai, Tamil Nadu 603103, India}
%\email{ksaha@cmi.ac.in}

%%Author3
%\address{\small \rm  Discipline of Mathematics, IIT Gandhinagar, Palaj, Gandhinagar, 
%Gujarat 382055, INDIA.}
%\email{indranathsg@iitgn.ac.in}
%\thanks{The second author is the corresponding author; supported by the 
%MATRICS research grant MTR/2018/000420, sponsored by the SERB, Government of India.}

\date{}

\subjclass[2020]{Primary 13F20, 05E40; Secondary 13C70, 05C69}

\keywords{Cover ideals, $\mathrm{v}$-number, Castelnuovo-Mumford regularity}

\allowdisplaybreaks

\begin{abstract}
The $\mathrm{v}$-number of a graded ideal $I\subseteq R$, denoted by $\mathrm{v}(I)$, is the minimum degree of a polynomial $f$ for which $I:f$ is a prime ideal. Jaramillo and Villarreal (J Combin Theory Ser A 177:105310, 2021) studied the $\mathrm{v}$-number of edge ideals. In this paper, we study the $\mathrm{v}$ number of the cover ideal $J(G)$ of a graph $G$. The main result shows that $\mathrm{v}(J(G))\leq \mathrm{reg}(R/J(G))$ for any simple graph $G$, which is quite surprising because, for the case of edge ideals, this inequality does not hold. Our main result relates $\mathrm{v}(J(G))$ with the Cohen-Macaulay property of $R/I(G)$. We provide an infinite class of connected graphs, which satisfy $\mathrm{v}(J(G))=\mathrm{reg}(R/J(G))$. Also, we show that for every positive integer $k$, there exists a connected graph $G_k$ such that $\mathrm{reg}(R/J(G_k))-\mathrm{v}(J(G_k))=k$. Also, we explicitly compute the $\mathrm{v}$-number of cover ideals of cycles.
\end{abstract}

\maketitle

\section{Introduction}

The concept of primary decomposition, introduced by Emmy Noether in 1921 \cite{noether21}, brought a revolution in the literature of Commutative Algebra and Algebraic Geometry. Many theories would not have been possible to establish without the primary decomposition theorem, and this makes Noetherian rings so special. Let $I$ be an ideal in a Noetherian ring $S$. Then $I$ can be written as a finite irredundant intersection of primary ideals, and their radicals are known as the associated primes of $I$. We denote the set of associated primes of $I$ by $\mathrm{Ass}(I)$. It is well-known that associated primes of $I$ are precisely the prime ideals of the form $I:f$ for some $f\in S$. For graded ideals, we can choose $f$ to be a homogeneous element of $S$, which motivates the following definition of the $\mathrm{v}$-number.

\begin{definition}{\rm
Let $R=K[x_{1}\,\ldots, x_{n}]=\bigoplus_{d=0}^{\infty}R_{d}$ 
denote the polynomial ring in $n$ variables over a field $K$
with the standard gradation. Let $I$ be a proper graded ideal of $R$. Then the $\mathrm{v}$-\textit{number} of $I$, denoted by $\mathrm{v}(I)$, is defined as follows
$$ \mathrm{v}(I):=
 \mathrm{min}\{d\geq 0 \mid \exists\, f\in R_{d}\,\,\text{and}\,\, \mathfrak{p}\in \mathrm{Ass}(I)\,\, \text{satisfying}\,\, I:f=\mathfrak{p} \}.$$
For each $\mathfrak{p}\in \mathrm{Ass}(I)$, we can locally define the $\mathrm{v}$-number as
$$\mathrm{v}_{\mathfrak{p}}(I):=\mathrm{min}\{d\geq 0 \mid \exists\, f\in R_{d}\,\, \text{satisfying}\,\, I:f=\mathfrak{p} \}.$$
Then we have $\mathrm{v}(I)=\mathrm{min}\{\mathrm{v}_{\mathfrak{p}}(I)\mid \mathfrak{p}\in\mathrm{Ass}(I)\}$. Also, note that $\mathrm{v}(I)=0$ if and only if $I$ is a prime ideal.
}
\end{definition}

The invariant $\mathrm{v}$-number has been introduced very recently in the theory of Reed-Muller-type codes \cite{cstpv20} to investigate the asymptotic behaviour of the minimum distance function of those codes. Let $\mathbb{X}$ be a  finite set of projective points and $I(\mathbb{X})$ denote its vanishing ideal. Then it has been shown in \cite{cstpv20} that $\mathrm{v}(I(\mathbb{X}))$ is equal to the \textit{regularity-index} of the minimum distance function $\delta_{I(\mathbb{X})}$ of the projective Reed-Muller-type codes associated to $\mathbb{X}$. Moreover, we have $\delta_{I(\mathbb{X})}(d)=1$ if and only if $\mathrm{v}(I(\mathbb{X}))\leq d$. Thanks to Jaramillo and Seccia \cite{js23} for pointing out the geometrical point of view of the $\mathrm{v}$-number. In particular, the degree of a truncator of a finite set of projective points discussed in \cite{gkr93} is related to the notion of local $\mathrm{v}$-number.\par

So far, we have not that much literature on the $\mathrm{v}$-number. The $\mathrm{v}$-number of square-free monomial ideals has been studied in \cite{civan23}, \cite{grv21}, \cite{vedge}, \cite{ki_vmon}. The $\mathrm{v}$-number of monomial ideals via their polarizations is given in \cite{ki_vmon}. The $\mathrm{v}$-number of binomial edge ideals has been discussed in \cite{ski23}, \cite{js23}. Also, Ficarra and Sgroi \cite{fs23} initiate the study of the asymptotic behaviour of the v-number of graded ideals.\par 

The study of edge ideals of graphs introduced by Villarreal \cite{vil90} gives a new vision in the theory of combinatorial commutative algebra. There is a vast literature on these ideals, and still going on (see the survey articles \cite{mv12} and \cite{van13}). The cover ideals of graphs, which are the Alexander dual of edge ideals, are equally important in this theory. One of the most explored topics in the theory of edge ideals is their Castelnuovo-Mumford regularity (in short, regularity). However, the regularity of cover ideals of graphs is less covered. Due to Terai's formula \cite{terai99}, the study of the regularity of cover ideals is equivalent to the study of the projective dimension of edge ideals. Therefore, to know more about the regularity of cover ideals, readers can look into the articles \cite{dhs13} and \cite{ds15}. On the other hand, for many classes of graded ideals $I$, it has been proved that $\mathrm{v}(I)\leq \mathrm{reg}(R/I)$ (see \cite{ski23}, \cite{npv18}, \cite{cstpv20}, \cite{vedge}, \cite{ki_vmon}). Later, Civan \cite{civan23} showed that for any positive ineteger $k$ there exists a connected graph $G$ such that $\mathrm{v}(I(G))-\mathrm{reg}(R/I(G))=k$, where $I(G)$ denotes the edge ideal of $G$. The natural question arises for the cover ideals of graphs (denoted by $J(G)$) also, i.e., whether $\mathrm{v}(J(G))\leq \mathrm{reg}(R/J(G))$ or not. This paper investigates the relation between the $\mathrm{v}$-number of cover ideals and their regularity. Surprisingly, we get a positive answer to the previous question. A summary of our results is given below.\par 

The paper consists of two sections excluding the introduction part. In Section \ref{preli}, we discuss some definitions, concepts, and known results both from combinatorics and commutative algebra. In Section \ref{seccovervreg}, we first give a large class of graphs $G$ for which $\mathrm{v}(J(G))\leq \mathrm{bight}(I(G))-1$ (see Proposition \ref{propbght}). To show there exist some classes of graphs $G$ other than the class considered in Proposition \ref{propbght} satisfying $\mathrm{v}(J(G))\leq \mathrm{bight}(I(G))-1$, we explicitly compute the $\mathrm{v}$-number of cover ideals of cycles in Proposition \ref{propvcycle}. Then we show in Theorem \ref{thmvmulti} that if $G$ is a complete multipartite graph, then $\mathrm{v}(J(G))=\mathrm{reg}(R/J(G))$. Also, Theorem \ref{thmvmulti} provides an infinite class of graphs for which $\mathrm{v}(J(G))>\mathrm{bight}(I(G))-1$ holds, and this class contain an infinite class of unmixed graphs also. Next, we establish the relation between the $\mathrm{v}$-number and the Castelnuovo-Mumford regularity of cover ideals of graphs as follows:\medskip

\noindent\textbf{Theorem \ref{thmcovervreg}.} \textit{Let $G$ be a simple graph. Then $\mathrm{v}(J(G))\leq \mathrm{reg}(R/J(G))$. In other words, $\mathrm{v}(J(G))\leq \mathrm{pd}(R/I(G))-1$.}
\medskip

\noindent As a corollary of Theorem \ref{thmcovervreg}, we show that $R/I(G)$ is Cohen-Macaulay if and only if $\mathrm{v}(J(G))=\mathrm{reg}(R/J(G))=\mathrm{ht}(I(G))-1$ (see Corollary \ref{corcmv}). Now the natural question arises for the literature is that how larger can be the regularity of $R/J(G)$ than $\mathrm{v}(J(G))$. To answer this question, we prove the following theorem:
\medskip

\noindent\textbf{Theorem \ref{thmreg-v=k}} \textit{For every positive integer $k$ there exists a connected graph $G_k$ such that $\mathrm{reg}(R/J(G_k))-\mathrm{v}(J(G_k))=k$.}
\medskip

\noindent Finally, we discuss some questions in Remark \ref{remq}, which may arise from our results, and end this article with an open problem Question \ref{q1}.

\section{Preliminaries}\label{preli}

By a monomial in a polynomial ring $R=K[x_1,\ldots,x_n]$, we mean a polynomial of the form $x_{1}^{a_{1}}\cdots x_{n}^{a_{n}}$ with each $a_{i}\in \mathbb{N}_{0}$, where $\mathbb{N}_{0}$ denotes the set of all non-negative integers. An ideal $I\subseteq R$ is called a \textit{monomial ideal} if it is minimally generated by a set of monomials in $R$. The set of minimal monomial generators of $I$ is unique and denoted by $G(I)$. If $G(I)$ consists of only square-free monomials (i.e., if each $a_{i}\in\{0,1\}$), then we say $I$ is a \textit{square-free} monomial ideal.

\begin{definition}{\rm
Let $I$ and $J$ be two ideals of $R$. Then $I:J= \{u\in R \mid uv\in I\,\,\text{for\,\, all}\,\,v\in J\}$ is an ideal of $R$, known as \textit{colon  ideal}.
For an element $f\in R$, we write $I:f$ to mean the ideal $I:\big<f\big>$. If $I$ is a monomial ideal and $f\in R$ is a monomial, then it is well-known that $I:f=\big <\dfrac{u}{\mathrm{gcd}(u,f)}\mid u\in G(I)\big>$.
}
\end{definition}

\begin{definition}{\rm
A \textit{hypergraph} (simple) $\mathcal{H}$ is a pair of two sets $(V(\mathcal{H}),E(\mathcal{H}))$, where $V(\mathcal{H})$ is called the vertex set and $E(\mathcal{H})$ is a collection of subsets of $V(\mathcal{H})$, called edge set, such that no two elements (called edges) of $E(\mathcal{H})$ contains each other. A graph (simple) is an example of a hypergraph whose edges are of cardinality two.
} 
\end{definition}

Let $\mathcal{H}$ be a hypergraph. A subset $C\subseteq V(\mathcal{H})$ is called a \textit{vertex cover} of $\mathcal{H}$ if $C\cap e\neq \emptyset$ for all $e\in E(\mathcal{H})$. If a vertex cover is minimal with respect to inclusion, then we call it a \textit{minimal vertex cover}. The minimum cardinality among all the vertex covers of $\mathcal{H}$ is known as the \textit{vertex covering number} of $\mathcal{H}$, and we denote it by $\alpha_{0}(\mathcal{H})$. Also, a subset $A\subseteq V(\mathcal{H})$ is said to be \textit{independent} if no edge of $\mathcal{H}$ is contained in $A$, and $A$ is said to be \textit{maximal independent set} if it is maximal with respect to inclusion. 
\medskip

\begin{definition}{\rm
Let $\mathcal{H}$ be a hypergraph with $V(\mathcal{H})=\{x_{1},\ldots,x_{n}\}$. Corresponding to a set of vertices $A$ of $\mathcal{H}$, we associate a square-free monomial $X_{A}:=\prod_{x_{i}\in A} x_{i}$ in the polynomial ring $R=K[x_{1},\ldots,x_{n}]$, where $K$ is a field. The edge ideal of $\mathcal{H}$, denoted by $I(\mathcal{H})$, is a square-free monomial ideal of $R$ defined as follows:
$$I(\mathcal{H})=\big<X_{e}\mid e\in E(\mathcal{H})\big>.$$

Conversely, corresponding to a square-free monomial ideal $I$, there exists an unique hypergraph $\mathcal{H}(I)$ such that $I=I(\mathcal{H}(I))$.
}
\end{definition}

Let $\mathcal{H}$ be a hypergraph. Then $I(\mathcal{H})$ is a radical ideal of $R$ and the primary decomposition of $I(\mathcal{H})$ is given by
$$I(\mathcal{H})=\bigcap_{C\,\,\text{is a minimal vertex cover of}\,\,\mathcal{H}}\big<C\big>.$$
From the above decomposition, it is clear that $\mathrm{ht}(I(\mathcal{H}))=\alpha_{0}(\mathcal{H})$ and $\mathrm{dim}(R/I(\mathcal{H}))=n-\alpha_{0}(\mathcal{H})$, where $\mathrm{ht}(I(\mathcal{H}))$ denotes the height of $I(\mathcal{H})$ and $\mathrm{dim}(R/I(\mathcal{H}))$ denotes the Krull dimension of $R/I(\mathcal{H})$. The maximum height among the associated prime ideals of $I(\mathcal{H})$, known as \textit{big height}, is denoted by $\mathrm{bight}(I(\mathcal{H}))$ and it is clear from the primary decomposition of $I(\mathcal{H})$ that $\mathrm{bight}(I(\mathcal{H}))$ is the maximum cardinality of a minimal vertex cover of $\mathcal{H}$.
\medskip

\begin{definition}{\rm
Let $\mathcal{H}$ be a hypergraph. Then the \textit{cover ideal} of $\mathcal{H}$, denoted by $J(\mathcal{H})$, is the square-free monomial ideal generated by the set $\{X_{C}\mid C\,\,\text{is a minimal vertex cover of}\,\,\mathcal{H}\}$. The ideal $J(\mathcal{H})$ coincide with the Alexander dual $I(\mathcal{H})^{\vee}$ of $I(\mathcal{H})$, i.e.,
$$I(\mathcal{H}^{\vee}):=\bigcap_{e\in E(\mathcal{H})} \big<e\big>=J(\mathcal{H}).$$
} 
\end{definition}

For an independent set $A$ of a hypergraph $\mathcal{H}$, consider the following set 
$$\mathcal{N}_{\mathcal{H}}(A):=\{x_{i}\in V(\mathcal{H})\mid \{x_{i}\}\cup A\,\, \text{contains\,\,an\,\,edge\,\,of}\,\, \mathcal{H}\}.$$
The set $\mathcal{N}_{\mathcal{H}}(A)$ is called the \textit{neighbour} set of $A$ in $\mathcal{H}$ and we say $\mathcal{N}_{\mathcal{H}}[A]:=\mathcal{N}_{\mathcal{H}}(A)\cup A$ the \textit{closed neighbour} of $A$. When $G$ is a graph and $v$ is a vertex of $G$, we write $\mathcal{N}_{G}(v):=\mathcal{N}_{G}(\{v\})=\{u\in V(G)\mid \{u,v\}\in E(G)\}$ and $\mathcal{N}_{G}[v]:=\mathcal{N}_{G}(v)\cup \{v\}$. For $B\subseteq V(G)$, we denote the induced subgraph of $G$ on the vertex set $B$ by $G[B]$. For $A\subseteq V(G)$, we denote the graph $G[V(G)\setminus A]$ by $G\setminus A$.
\medskip

Let us denote by $\mathcal{A}_{\mathcal{H}}$ the collection of those independent sets $A$ of $\mathcal{H}$ such that $\mathcal{N}_{\mathcal{H}}(A)$ is a minimal vertex cover of $\mathcal{H}$. Then by \cite[Lemma 3.4]{vedge}, we have $I(\mathcal{H}):X_{A}=\big<\mathcal{N}_{\mathcal{H}}(A)\big>$ for all $A\in \mathcal{A}_{H}$ and due to \cite[Theorem 3.5]{vedge}, we get
$$ \mathrm{v}(I(\mathcal{H}))=\mathrm{min}\{\vert A\vert : A\in\mathcal{A}_{\mathcal{H}} \}.$$

\noindent In this paper, we study the $\mathrm{v}$-number of cover ideals of graphs, and the analogue of the above formula for cover ideals of graphs is given in the following remark.

\begin{remark}\label{remvcover}{\rm
Let $\mathcal{C}_{G}$ denote the collection of all vertex covers of a graph $G$. Then for any $C\subseteq V(G)$ such that $C\not\in \mathcal{C}_{G}$, if there exists an edge $\{x_{i},x_{j}\}$ of $G$ with $C\cup\{x_{i}\},C\cup\{x_{j}\}\in \mathcal{C}_{G}$, then we have $J(G):X_{C}=\big<x_{i},x_{j}\big>$. Moreover, we have
\begin{align*}
\mathrm{v}(J(G))=\mathrm{min}\{\vert C\vert :\,\,& C\not\in \mathcal{C}_{G},\,\text{but there exists an edge}\, \{x_i,x_j\}\\
&\text{such that}\,\, C\cup \{x_i\}\,\, \text{and}\,\, C\cup \{x_j\}\in \mathcal{C}_{G}\}.
\end{align*} 
}
\end{remark}

\begin{definition}{\rm
Let $\textbf{F.}$ be a minimal graded free resolutions of $R/I$ as $R$ module such that
$$\textbf{F.}\,\,\, 0\rightarrow \bigoplus_{j} R(-j)^{\beta_{k,j}}\rightarrow \cdots \rightarrow \bigoplus_{j} R(-j)^{\beta_{1,j}}\rightarrow R\rightarrow R/I\rightarrow 0,$$
\noindent where $I$ is a graded ideal of $R$. The \textit{Castelnuovo-Mumford regularity} of $R/I$ (in short, \textit{regularity} of $R/I$) is denoted by $\mathrm{reg}(R/I)$ and defined as follows
$$ \mathrm{reg}(R/I)=\mathrm{max}\{j-i\mid \beta_{i,j}\not=0\}.$$
The \textit{projective dimension} of $R/I$, denoted by $\mathrm{pd}(R/I)$, is defined as follows
 $$\mathrm{pd}(R/I)=\mathrm{max}\{i\mid \beta_{i,j}\not=0\,\, \text{for}\,\, \text{some}\,\, j\}=k.$$
The \textit{depth} of $R/I$ is the length of a maximal regular sequence on $R/I$ contained in the unique homogeneous maximal ideal $\big<x_1,\ldots,x_n\big>$ and we denote it by $\mathrm{depth}(R/I)$. 
 }
\end{definition}

We will frequently use the following graded version of the Auslander-Buchsbaum formula \cite{ausbuch57} and Terai's formula \cite{terai99} to establish our results.
\medskip

\noindent \textbf{The Auslander-Buchsbaum Formula.} \textit{Let $R=K[x_1,\ldots,x_n]$ be a polynomial ring over a field $K$ and $I\subseteq R$ be a graded ideal. Then
$$\mathrm{depth}(R/I)+\mathrm{pd}(R/I)=n.$$}

\noindent \textbf{Terai's Formula.} \textit{Let $I\subseteq R$ be a square-free monomial ideal. Then $$\mathrm{pd}(R/I)=\mathrm{reg}(R/I^{\vee})+1.$$
}
\begin{definition}\label{v2.9}{\rm
Let $\mathcal{H}$ be a hypergraph. A set of edges $\{e_1,\ldots,e_k\}$ of $\mathcal{H}$ is said to form an \textit{induced matching} in $\mathcal{H}$ if $e_i\cap e_{j}=\emptyset$ for all $i\neq j$ and there is no edge contained in $e_1\cup\cdots\cup e_k$ other than $e_1,\ldots,e_k$. Again, A set of edges $\{e_{1}^{\prime},\ldots,e_{l}^{\prime}\}$ of $\mathcal{H}$ is called a $2$\textit{-collage} in $\mathcal{H}$ if for each $e\in E(\mathcal{H})$, there exists a vertex $v\in e$ such that $e\setminus\{v\}\subseteq e_{i}^{\prime}$ for some $1\leq i\leq l$.
}
\end{definition}

\begin{theorem}[{\cite[Theorem 4.2 \& Theorem 4.13]{ha14}}]\label{thmregimcol}
Let $\{e_1,\ldots,e_k\}$ forms an induced matching and $\{e_1^{\prime},\ldots,e_{l}^{\prime}\}$ forms a $2$-collage in a hypergraph $\mathcal{H}$. Then
$$\sum_{i=1}^{k}(\vert e_i\vert -1)\leq \mathrm{reg}(R/I(\mathcal{H}))\leq \sum_{i=1}^{l}(\vert e_{i}^{\prime}\vert-1).$$
\end{theorem}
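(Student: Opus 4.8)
The plan is to establish the two inequalities separately, since they rest on entirely different mechanisms: the lower bound comes from a restriction principle for regularity combined with the behaviour of regularity on disjoint variable supports, whereas the upper bound is produced by a recursive argument driven by a short exact sequence.

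For the lower bound, I would set $W := e_1 \cup \cdots \cup e_k$ and pass to the induced subhypergraph $\mathcal{H}[W]$. Because $\{e_1,\ldots,e_k\}$ is an induced matching, the $e_i$ are pairwise disjoint and $\mathcal{H}[W]$ has no edges other than $e_1,\ldots,e_k$; hence $I(\mathcal{H}[W]) = \langle X_{e_1},\ldots,X_{e_k}\rangle$ is generated by monomials in pairwise disjoint variables, which therefore form a homogeneous regular sequence. The Koszul complex is then a minimal free resolution, and the regularity of such a complete intersection equals $\sum_{i=1}^{k}(\deg X_{e_i}-1)=\sum_{i=1}^{k}(|e_i|-1)$. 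It remains to invoke the restriction inequality $\mathrm{reg}(R/I(\mathcal{H}[W]))\le \mathrm{reg}(R/I(\mathcal{H}))$, which I would deduce from Hochster's formula: every nonzero multigraded Betti number of $R/I(\mathcal{H}[W])$ is supported on a subset of $W$ and reappears among the Betti numbers of $R/I(\mathcal{H})$, so no homological shift is lost under restriction. Chaining these gives the claimed lower bound.

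For the upper bound I would induct on $N:=\sum_{i=1}^{l}(|e_i'|-1)$ (with the number of vertices as a secondary parameter), using for a well-chosen variable $x$ the short exact sequence
$$0\to (R/(I:x))(-1)\xrightarrow{\ \cdot x\ } R/I\to R/(I,x)\to 0,$$
which yields $\mathrm{reg}(R/I)\le \max\{\mathrm{reg}(R/(I:x))+1,\ \mathrm{reg}(R/(I,x))\}$. The strategy is to pick $x$ to be a vertex of a collage edge $e_l'$ with $|e_l'|\ge 2$ (such an edge exists once $N\ge 1$) and to verify two combinatorial claims: (i) $I(\mathcal{H}):x$ is again a hypergraph edge ideal admitting a $2$-collage in which $e_l'$ has been replaced by $e_l'\setminus\{x\}$, so the associated bound drops to $N-1$ and $\mathrm{reg}(R/(I:x))+1\le N$ by induction; and (ii) $(I(\mathcal{H}),x)$ is, modulo $x$, the edge ideal of the vertex-deletion $\mathcal{H}\setminus x$, which still carries a $2$-collage of size at most $N$, giving $\mathrm{reg}(R/(I,x))\le N$. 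Taking the maximum yields $\mathrm{reg}(R/I)\le N$, while the base case $N=0$ forces every edge to be a singleton (using that $\mathcal{H}$ is simple), so that $R/I(\mathcal{H})$ is a polynomial ring of regularity $0$.

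The main obstacle is precisely steps (i) and (ii): confirming that the colon and deletion operations genuinely preserve a $2$-collage with the predicted decrease in $N$. Forming $I:x$ can shrink every edge containing $x$ to the smaller edge $e\setminus\{x\}$ and can break minimality, so one must re-minimalize the generators and then re-check the covering condition of Definition~\ref{v2.9} to confirm that $\{e_1',\ldots,e_{l-1}',\,e_l'\setminus\{x\}\}$ still witnesses every (possibly newly created) edge. Symmetrically, deleting $x$ removes all collage edges through $x$, and an edge $e$ of $\mathcal{H}\setminus x$ that was previously covered by such an $e_i'$ now only satisfies $e\setminus\{v\}\subseteq e_i'\setminus\{x\}$, a set that need not itself be an edge of $\mathcal{H}\setminus x$; reassembling a legitimate $2$-collage of size at most $N$ is therefore the crux. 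Handling this bookkeeping carefully—especially the new edges produced by the colon operation—is where the real content lies, with the two regularity inequalities above serving only as scaffolding.
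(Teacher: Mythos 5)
A preliminary remark: the paper itself offers no proof of this statement — it is quoted verbatim from the cited survey \cite{ha14} (Theorems 4.2 and 4.13 there), so there is no in-paper argument to compare you against; your attempt has to stand on its own. Your proof of the \emph{lower} bound does stand: restricting to $W=e_1\cup\cdots\cup e_k$ loses no multigraded Betti numbers by Hochster's formula, the induced-matching hypothesis forces $I(\mathcal{H}[W])=\langle X_{e_1},\ldots,X_{e_k}\rangle$ to be a complete intersection on pairwise disjoint variables, and the Koszul complex gives its regularity as $\sum_{i=1}^{k}(|e_i|-1)$. That half is complete and is the standard argument.

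The \emph{upper} bound, however, is only a plan, and the two steps you yourself identify as the crux do not go through as stated. The definition of a $2$-collage requires the sets $e_i'$ to be \emph{edges} of $\mathcal{H}$, and this requirement carries real content: if one relaxes it to arbitrary subsets satisfying the covering condition, the statement becomes false. For $\mathcal{H}=K_2$ with the single edge $\{a,b\}$, the set $E_1=\{a\}$ satisfies the covering condition (take $v=b$, so $e\setminus\{v\}=\{a\}\subseteq E_1$) with $\sum(|E_i|-1)=0$, yet $\mathrm{reg}(R/\langle ab\rangle)=1$. Consequently, in your step (ii) you cannot simply pass to the sets $e_i'\setminus\{x\}$ on $\mathcal{H}\setminus x$: these need not be edges of $\mathcal{H}\setminus x$, so the induction hypothesis you would invoke is an instance of a false statement. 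The same difficulty afflicts step (i): several collage edges may contain $x$, the set $e_l'\setminus\{x\}$ (and the other $e_i'$) may fail to survive re-minimalization as edges of the clutter of $I(\mathcal{H}):x$, and the newly created edges must be re-covered without increasing $\sum(|e_i'|-1)$. So the deferred ``bookkeeping'' is not routine verification but precisely the point at which a further idea is required (the published argument of H\`a and Woodroofe organizes the induction so that the collage always consists of genuine edges). As written, the right-hand inequality of the theorem is not established.
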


\begin{definition}{\rm
A \textit{path graph} of length $n$, denoted by $P_n$, is a graph on $n+1$ vertices $\{x_1,\ldots,x_{n+1}\}$ such that $E(P_n)=\{\{x_{i},x_{i+1}\}: 1\leq i\leq n\}$. A \textit{cycle} of length $n$, denoted by $C_n$, is a connected graph on $n$ vertices having exactly $n$ edges. A graph $G$ is called a \textit{chordal graph} if $G$ has no induced cycle of length $>3$. A graph $G$ is said to be \textit{complete} if there is an edge between every pair of vertices. We denote a complete graph on $n$ vertices by $K_n$. A graph is said to be \textit{empty} if there is no edge.
}
\end{definition}

\section{The Relation Between $\mathrm{v}(J(G))$ and $\mathrm{reg}(R/J(G))$}\label{seccovervreg}

In this section, we study the relation between the $\mathrm{v}$-number and the regularity of cover ideals of graphs. Also, we show how the $\mathrm{v}$-number of cover ideal of a graph is related to the Cohen-Macaulay property of the corresponding edge ideal.

\begin{proposition}\label{propbght}
Let $G$ be a simple graph and there exists an edge $e=\{u,v\}\in E(G)$ such that $\mathcal{N}_{G}(u)\subseteq \mathcal{N}_{G}[v]$ or $\mathcal{N}_{G}(v)\subseteq \mathcal{N}_{G}[u]$. Then 
$$\mathrm{v}(J(G))\leq \mathrm{bight}(I(G))-1\leq \mathrm{reg}(R/J(G)).$$
\end{proposition}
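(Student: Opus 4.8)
The plan is to prove the two inequalities separately, observing that the right-hand bound $\mathrm{bight}(I(G))-1\le\mathrm{reg}(R/J(G))$ holds for \emph{every} graph and that only the left-hand bound uses the neighbourhood hypothesis. For the right-hand bound I would invoke Terai's formula, which gives $\mathrm{reg}(R/J(G))=\mathrm{pd}(R/I(G))-1$ because $J(G)=I(G)^{\vee}$, and then reduce to showing $\mathrm{pd}(R/I(G))\ge\mathrm{bight}(I(G))$. For this, pick $\mathfrak p\in\mathrm{Ass}(I(G))$ with $\mathrm{ht}(\mathfrak p)=\mathrm{bight}(I(G))$; localizing at $\mathfrak p$ gives $\mathrm{depth}_{R_{\mathfrak p}}(R/I(G))_{\mathfrak p}=0$, so the Auslander-Buchsbaum formula over the regular local ring $R_{\mathfrak p}$ forces $\mathrm{pd}_{R_{\mathfrak p}}(R/I(G))_{\mathfrak p}=\mathrm{ht}(\mathfrak p)$. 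Since projective dimension cannot increase under localization, $\mathrm{pd}(R/I(G))\ge\mathrm{ht}(\mathfrak p)=\mathrm{bight}(I(G))$, and the right-hand bound follows.

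For the left-hand bound I would use the combinatorial description of $\mathrm{v}(J(G))$ in Remark \ref{remvcover}: it is enough to produce a set $C\subseteq V(G)$ that is not a vertex cover but such that $C\cup\{u\}$ and $C\cup\{v\}$ are vertex covers for some edge $\{u,v\}$, with $|C|\le\mathrm{bight}(I(G))-1$. Assuming without loss of generality that $\mathcal N_G(u)\subseteq\mathcal N_G[v]$, let $A$ be a maximal independent set containing $v$ (obtained by extending $\{v\}$), and set $S=A\cup\{u\}$ and $C=V(G)\setminus S$. The crucial observation is that $\{u,v\}$ is the only edge inside $S$: independence of $A$ together with $v\in A$ forces $A\cap\mathcal N_G(v)=\emptyset$, whence $A\cap\mathcal N_G(u)\subseteq A\cap\mathcal N_G[v]=\{v\}$, so $u$ has no neighbour in $A$ besides $v$. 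Consequently $C$ misses the edge $\{u,v\}$, while $C\cup\{u\}=V(G)\setminus A$ and $C\cup\{v\}=V(G)\setminus(S\setminus\{v\})$ are complements of independent sets and are therefore vertex covers.

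To finish, let $i(G)$ denote the minimum cardinality of a maximal independent set; since minimal vertex covers are precisely the complements of maximal independent sets, we have $\mathrm{bight}(I(G))=|V(G)|-i(G)$. Maximality of $A$ gives $|A|\ge i(G)$, so $|C|=|V(G)|-|A|-1\le|V(G)|-i(G)-1=\mathrm{bight}(I(G))-1$, and Remark \ref{remvcover} applied to the pivot edge $\{u,v\}$ yields $\mathrm{v}(J(G))\le\mathrm{bight}(I(G))-1$. I expect the main obstacle to be this second step: one must choose the correct pivot edge and verify, using exactly the containment $\mathcal N_G(u)\subseteq\mathcal N_G[v]$, that adjoining $u$ to an independent set through $v$ creates no edge other than $\{u,v\}$. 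The remaining arguments are routine translations between vertex covers, independent sets, and the invariants $\mathrm{bight}(I(G))$ and $i(G)$.
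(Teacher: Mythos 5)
Your proposal is correct. The left-hand inequality, which is the part that actually uses the hypothesis, is essentially the paper's argument written in the dual language of independent sets: the paper takes a minimal vertex cover $C$ with $v\notin C$ (whose complement is exactly a maximal independent set $A$ containing $v$) and deletes $u$ from it, obtaining the same set $V(G)\setminus(A\cup\{u\})$ that you construct; your verification that $u$ has no neighbour in $A$ other than $v$ is the same computation as the paper's observation that $\mathcal N_G(u)\setminus\{v\}\subseteq\mathcal N_G(v)\subseteq C$, and the cardinality bound via $|A|\ge i(G)$ is just the statement that $|C|\le\mathrm{bight}(I(G))$ for a minimal vertex cover. Where you genuinely diverge is the right-hand inequality: the paper gets $\mathrm{reg}(R/J(G))\ge\mathrm{bight}(I(G))-1$ by viewing a largest minimal vertex cover as a single edge of the hypergraph $\mathcal H(J(G))$, i.e.\ an induced matching of size one, and citing Theorem \ref{thmregimcol}; you instead combine Terai's formula with the general fact $\mathrm{pd}(R/I)\ge\mathrm{bight}(I)$, proved by localizing at an associated prime of maximal height and applying Auslander--Buchsbaum over the regular local ring. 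Your route is more general (it works for any ideal, not just via a combinatorial regularity bound) and self-contained modulo standard homological facts, while the paper's is a one-line citation of a result it has already recorded; both are valid.
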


\begin{proof}
Suppose $\mathcal{N}_{G}(u)\subseteq \mathcal{N}_{G}[v]$. Consider a minimal vertex cover $C$ of $G$ such that $v\not\in C$. Then $\mathcal{N}_{G}(v)\subseteq C$ and thus, $u\in C$. Since $\mathcal{N}_{G}(u)\setminus \{v\}\subseteq \mathcal{N}_{G}(v)$, we have $\mathcal{N}_{G}(u)\setminus \{v\}\subseteq C$. Now consider the set of vertices $C'=C\setminus\{u\}$. Then $C'$ covers all the edges of $G$ except the edge $\{u,v\}$. Therefore, $C'$ is not a cover of $G$, but $C'\cup\{u\}$ and $C'\cup\{v\}$ are both covers of $G$. Hence, we have $J(G): X_{C'}=\big<u,v\big>$ and thus, $\mathrm{v}(J(G))\leq \vert C'\vert$. Now $\vert C'\vert =\vert C\vert -1\leq \mathrm{bight}(I(G))-1$ as $C$ is a minimal vertex cover of $G$. Since $\mathrm{reg}(R/J(G))\geq \mathrm{bight}(I(G))-1$ by Theorem \ref{thmregimcol}, the result follows.
\end{proof}

\begin{corollary}\label{corbght}
Let $\mathcal{G}^f$ be the class of graphs having a free vertex (note that $\mathcal{G}^f$ includes the class of chordal graphs). Then $\mathrm{v}(J(G))\leq \mathrm{bight}(I(G))-1$ for all $G\in \mathcal{G}^{f}$.
\end{corollary}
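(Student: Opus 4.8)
The plan is to deduce the corollary from Proposition~\ref{propbght} by producing, for any graph in $\mathcal{G}^{f}$, a single edge of the shape required there. Recall that a \emph{free vertex} $v$ is one whose closed neighbourhood $\mathcal{N}_{G}[v]$ induces a complete subgraph, so that the vertices of $\mathcal{N}_{G}[v]$ are pairwise adjacent. Under the standing convention that the graphs carry no isolated vertices (equivalently, taking the free vertex to have positive degree), such a $v$ has at least one neighbour $u \in \mathcal{N}_{G}(v)$, and I fix this edge $e = \{u, v\}$.

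The crux is the verification that $e$ satisfies the hypothesis of Proposition~\ref{propbght}, in the form $\mathcal{N}_{G}(v) \subseteq \mathcal{N}_{G}[u]$. Let $w \in \mathcal{N}_{G}(v)$. If $w = u$, then $w \in \mathcal{N}_{G}[u]$ trivially; if $w \neq u$, then $w$ and $u$ are distinct vertices of the clique $\mathcal{N}_{G}[v]$ and hence adjacent, so $w \in \mathcal{N}_{G}(u) \subseteq \mathcal{N}_{G}[u]$. This establishes $\mathcal{N}_{G}(v) \subseteq \mathcal{N}_{G}[u]$, and Proposition~\ref{propbght} applied to the edge $\{u, v\}$ immediately gives $\mathrm{v}(J(G)) \leq \mathrm{bight}(I(G)) - 1$.

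For the parenthetical assertion I would appeal to Dirac's classical theorem that every chordal graph possesses a simplicial vertex, which is precisely a free vertex in the sense above; thus every chordal graph lies in $\mathcal{G}^{f}$ and the bound applies to it. I do not anticipate a genuine obstacle, as the argument is essentially a one-line translation of the clique condition at $v$ into a neighbourhood inclusion at $u$. The only point demanding care is ensuring the chosen free vertex is not isolated: an isolated vertex is vacuously free yet yields no usable edge (and the inequality can indeed fail in its presence, e.g.\ for a four-cycle together with an isolated vertex), so one must select a free vertex of positive degree, which is exactly what the no-isolated-vertex convention guarantees.
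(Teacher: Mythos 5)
Your proof is correct and follows essentially the same route as the paper: pick a free (simplicial) vertex $v$, choose $u\in\mathcal{N}_{G}(v)$, observe $\mathcal{N}_{G}(v)\subseteq\mathcal{N}_{G}[u]$ from the clique condition, and invoke Proposition~\ref{propbght}. Your extra caveat about isolated vertices (which are vacuously free but yield no edge, so one must take a free vertex of positive degree) is a legitimate refinement that the paper's one-line proof glosses over.
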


\begin{proof}
Let $G\in \mathcal{G}^{f}$ be any graph. Then there is a free vertex $v\in V(G)$. Choose a vertex $u\in \mathcal{N}_{G}(v)$. Then we can see that $\mathcal{N}_{G}(v)\subseteq \mathcal{N}_{G}[u]$. Hence, the assertion follows from Proposition \ref{propbght}.
\end{proof}

Now we will compute the $\mathrm{v}$-number of cover ideals of cycles, which ensures that the condition given in Proposition \ref{propbght} is not necessary to hold the inequality $\mathrm{v}(J(G))\leq \mathrm{bight}(I(G))-1$.
\begin{proposition}\label{propvcycle}
Let $C_n$ be a cycle of length $n$. Then $\mathrm{v}(J(C_n))=\lfloor \frac{n}{2}\rfloor$. Moreover, $\mathrm{v}(J(C_n))\leq \alpha_{0}(C_n)$ and thus, for $n\neq 4$, we have $\mathrm{v}(J(C_n))\leq \mathrm{bight}(I(C_n))-1$.
\end{proposition}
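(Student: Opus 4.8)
The plan is to read off $\mathrm{v}(J(C_n))$ directly from the combinatorial description in Remark \ref{remvcover}. First I would analyze which sets $C$ actually contribute to that minimum. Suppose $C\notin\mathcal{C}_{C_n}$ but $C\cup\{x_i\},C\cup\{x_j\}\in\mathcal{C}_{C_n}$ for some edge $\{x_i,x_j\}$. Then neither $x_i$ nor $x_j$ can lie in $C$, for otherwise $C\cup\{x_i\}=C$ would already be a cover. Moreover, since $C\cup\{x_i\}$ is a cover, every edge left uncovered by $C$ is incident to $x_i$, and since $C\cup\{x_j\}$ is a cover, every such edge is also incident to $x_j$; in a simple graph the only edge incident to both is $\{x_i,x_j\}$ itself. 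Hence $\{x_i,x_j\}$ is the unique uncovered edge, and the admissible sets $C$ are exactly the vertex covers of the edge-deleted graph $C_n-\{x_i,x_j\}$ that avoid the two endpoints $x_i,x_j$. Thus $\mathrm{v}(J(C_n))$ is the least cardinality of such a set, minimized over all edges.

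By the edge-transitivity of $C_n$ I would fix the edge $e=\{x_1,x_2\}$; deleting $e$ turns $C_n$ into a path $P$ on the same $n$ vertices with endpoints $x_1,x_2$. The computation reduces to finding the smallest vertex cover of $P$ that uses only its $n-2$ interior vertices. For the lower bound, observe that $P$ has $n-1$ edges and each vertex covers at most two of them, so every vertex cover of $P$ has at least $\lceil(n-1)/2\rceil=\lfloor n/2\rfloor$ vertices. For the matching upper bound I would exhibit an explicit interior cover of size $\lfloor n/2\rfloor$: the neighbors of the two endpoints are forced in, and taking every second interior vertex from there covers all $n-1$ edges. This yields $\mathrm{v}(J(C_n))=\lfloor n/2\rfloor$.

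The bound $\mathrm{v}(J(C_n))\leq\alpha_0(C_n)$ is then immediate, since $\alpha_0(C_n)=\lceil n/2\rceil\geq\lfloor n/2\rfloor$. For the final assertion I would compute $\mathrm{bight}(I(C_n))$, which equals the largest cardinality of a minimal vertex cover of $C_n$. Since $C$ is a minimal vertex cover exactly when $V(C_n)\setminus C$ is a maximal independent set, the largest minimal vertex cover has size $n-i(C_n)$, where $i(C_n)$ is the minimum size of a maximal independent set. A short gap-analysis around the cycle---consecutive chosen vertices must be at distance $2$ or $3$ to be simultaneously independent and dominating---gives $i(C_n)=\lceil n/3\rceil$, so $\mathrm{bight}(I(C_n))=n-\lceil n/3\rceil$. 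It then remains to verify $\lfloor n/2\rfloor\leq n-\lceil n/3\rceil-1$, equivalently $\lfloor n/2\rfloor+\lceil n/3\rceil+1\leq n$, which I would check by writing $n=6q+r$ and inspecting the six residues; the inequality holds for every $n\geq 3$ except precisely $n=4$, which accounts for the exclusion.

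The main obstacle I anticipate is the characterization step: arguing cleanly that the admissible sets $C$ are exactly the endpoint-avoiding covers of an edge-deleted cycle, and then pinning the minimum interior cover of the resulting path to $\lfloor n/2\rfloor$ through the matching lower and upper bounds. Once the value $\lfloor n/2\rfloor$ is established, the $\mathrm{bight}$ computation and the closing residue check are routine.
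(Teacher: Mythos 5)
Your proposal is correct, and while the upper-bound construction (a set covering every edge of $C_n$ except one) is essentially the same as the paper's, your lower bound and your treatment of $\mathrm{bight}$ take a genuinely different route. The paper splits into parity cases: for odd $n$ it invokes the general bound $\mathrm{v}(J(G))\geq \alpha_0(G)-1$ from \cite[Lemma 3.16]{vedge}, and for even $n$ it argues ad hoc that the only two minimum vertex covers are the alternating sets and are disjoint, so no admissible $C$ of size $\frac{n}{2}-1$ exists. You instead first characterize the admissible sets $C$ intrinsically --- $J(C_n):X_C=\big<x_i,x_j\big>$ exactly when $C$ is a vertex cover of the path $C_n-\{x_i,x_j\}$ avoiding both endpoints --- and then get the lower bound $\lceil (n-1)/2\rceil=\lfloor n/2\rfloor$ uniformly by counting: $n-1$ edges, each vertex of the path covering at most two. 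This is self-contained (no appeal to the external lemma), parity-free, and the characterization step is reusable. For the last assertion you also do more than the paper: you compute $\mathrm{bight}(I(C_n))=n-\lceil n/3\rceil$ exactly via the independent domination number of the cycle and check the residues mod $6$, whereas the paper only asserts the comparison in passing (its claim $m=\mathrm{bight}(I(C_{2m}))-1$ in the even case is in fact only an inequality for large $m$, e.g.\ $\mathrm{bight}(I(C_{12}))=8$). Your version costs a little extra combinatorial bookkeeping but yields a cleaner, fully verified statement.
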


\begin{proof}
We consider the following labelling of vertices of $C_n$ such that
\begin{enumerate}
\item[$\bullet$] $V(C_n)=\{x_1,\ldots,x_n\},$
\item[$\bullet$] $E(C_n)=\{\{x_{i},x_{i+1}\}: 1\leq i\leq n\,\,\text{and}\,\,x_{n+1}=x_1\}.$
\end{enumerate}
First, we assume $C_n$ is an even cycle. Then $n=2m$ for some positive integer $m$. Let us take $C=\{x_1,x_3,\ldots,x_{2m-3},x_{2m-2}\}$. Now observe that $C\cup\{x_{2m}\}$ is a minimal vertex cover of $C_n$ and $C\cup\{x_{2m-1}\}$ is a vertex cover of $C_n$. Therefore, $J(C_n):x_1x_3\cdots x_{2m-3}x_{2m-1}=\big<x_{2m-1},x_{2m}\big>$ as $C$ is not a vertex cover of $C_n$. Thus, $\mathrm{v}(J(C_n))\leq m=\mathrm{bight}(I(C_n))-1$. In fact, we can see that there is only two minimal vertex covers with cardinality $\alpha_{0}(C_n)=m$ such that $\{x_1,x_3,\ldots, x_{2m-1}\}$ and $\{x_2,x_4,\ldots,x_{2m}\}$. Since these two minimal vertex covers have no common element, we can not get a $C\subseteq V(C_n)$ such that $C\cup\{x_i\}$ and $C\cup\{x_{i+1}\}$ are minimal vertex cover of $C_n$ for any $1\leq i\leq n$, where $x_{n+1}=x_1$. Therefore, $\mathrm{v}(J(C_n))\geq m$ and hence, $\mathrm{v}(J(C_n))=m$.\par 

Now let us consider $C_n$ to be an odd cycle, i.e., $n=2m+1$ for some positive integer $m$. It is well known that $\alpha_{0}(C_n)=(n+1)/2$ when $n$ is odd. Then we need at least $m+1$ vertices to cover the cycle $C_n$. Thus, we have $\mathrm{v}(J(G))\geq m$ by \cite[Lemma 3.16]{vedge}. We consider the set $C=\{x_{1},x_{3}, \ldots, x_{2m-1}\}$. Then $\vert C\vert=m$. Also, $C\cup\{x_{2m}\}$ and $C\cup \{x_{2m+1}\}$ forms a minimal vertex cover of $C_n$. Thus, $\mathrm{v}(J(C_n))=m$. Therefore, it follows from both cases that for any cycle $C_n$, we have $\mathrm{v}(J(C_n))=\lfloor \frac{n}{2}\rfloor$.
\end{proof}

\begin{definition}\label{defcompmulti}{\rm
A graph $G$ is said to be a \textit{complete multipartite graph} if there is a partition $V_1,\ldots,V_k$ (called partite sets) of $V(G)$ such that the induced subgraph $G[V_i]$ is empty for all $1\leq i\leq k$ and if $x_i\in V_i$ and $x_j\in V_j$ with $i\neq j$, then $\{x_i,x_j\}\in E(G)$.
}
\end{definition}

\begin{lemma}\label{lemcompmulti}
A graph $G$ is a complete multipartite graph if and only if $G\setminus \mathcal{N}_{G}(v)$ is an empty graph for all $v\in V(G)$.
\end{lemma}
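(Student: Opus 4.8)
The plan is to prove the two implications separately. The forward direction is a direct structural computation, while the converse rests on the observation that the hypothesis is precisely the statement that a natural ``non-adjacency-or-equality'' relation on $V(G)$ is transitive, hence an equivalence relation whose classes are forced to be the partite sets.

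For the forward direction, suppose $G$ is complete multipartite with partite sets $V_1,\ldots,V_k$, and fix $v\in V(G)$, say $v\in V_i$. By Definition \ref{defcompmulti}, $v$ is adjacent to every vertex outside $V_i$ and to no vertex inside $V_i$, so $\mathcal{N}_{G}(v)=\bigcup_{j\neq i}V_j$ and therefore $V(G)\setminus\mathcal{N}_{G}(v)=V_i$. Since $G[V_i]$ is empty by hypothesis, $G\setminus\mathcal{N}_{G}(v)=G[V_i]$ is an empty graph, as required.

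For the converse, assume $G\setminus\mathcal{N}_{G}(v)$ is empty for every $v\in V(G)$. I would define a relation $\approx$ on $V(G)$ by declaring $u\approx w$ whenever $u=w$ or $\{u,w\}\notin E(G)$. Reflexivity and symmetry are immediate from the definition and the fact that $G$ is simple and undirected. The key step is transitivity: given distinct $u,w,z$ with $u\approx w$ and $w\approx z$, both $u$ and $z$ lie in $V(G)\setminus\mathcal{N}_{G}(w)$, and since the induced subgraph on this set is empty by hypothesis, $u$ and $z$ are non-adjacent, giving $u\approx z$. Thus $\approx$ is an equivalence relation; I would then let $V_1,\ldots,V_k$ be its equivalence classes. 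Within each class, any two distinct vertices are non-adjacent, so each $G[V_i]$ is empty, and vertices in different classes are adjacent by the definition of $\approx$, so $G$ is complete multipartite with partite sets $V_1,\ldots,V_k$.

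I do not expect a serious obstacle here; the only delicate point is isolating the right relation and recognizing that the emptiness hypothesis is exactly what makes it transitive. The bookkeeping to check that distinct equivalence classes are completely joined (and that single classes are independent) is routine once transitivity is established.
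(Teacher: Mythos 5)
Your proof is correct. The forward direction is identical to the paper's. For the converse, the paper proceeds by an iterative construction: it picks $v_1$, sets $V_1=V(G)\setminus\mathcal{N}_G(v_1)$, picks $v_2\in\mathcal{N}_G(v_1)$, sets $V_2=V(G)\setminus\mathcal{N}_G(v_2)$, and so on, asserting that this terminates in a partition, and then verifies by contradiction that vertices in distinct parts are adjacent (if $x_i\in V_i$ and $x_j\in V_j$ were non-adjacent, then $G\setminus\mathcal{N}_G(x_i)$ would contain the edge $\{v_i,x_j\}$). Your route packages the same combinatorial content differently: you observe that the hypothesis is exactly transitivity of the relation ``equal or non-adjacent,'' so that its equivalence classes (which coincide with the sets $V(G)\setminus\mathcal{N}_G(v)$ appearing in the paper) automatically form a partition into independent sets with all cross-edges present. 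This buys you a cleaner argument --- in particular it sidesteps the paper's somewhat informal ``continuing this process we will get a partition'' step, since the partition property comes for free from the equivalence relation --- at the modest cost of introducing the relation and checking reflexivity and symmetry. Both arguments correctly use the emptiness of $G\setminus\mathcal{N}_G(w)$ at the single essential point (your transitivity check, the paper's contradiction), so the proofs are logically equivalent; yours is arguably the tidier write-up.
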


\begin{proof}
Let $G$ be a complete multipartite graph with the partite sets $V_1,\ldots, V_m$. Let $v$ be an arbitrarily chosen vertex of $G$. Then $v\in V_i$ for some $1\leq i\leq m$. Since $G$ is complete multipartite, $\mathcal{N}_{G}(v)=V_1\cup\cdots\cup V_{i-1}\cup V_{i+1}\cup\cdots\cup V_{m}$. Thus, $G\setminus \mathcal{N}_{G}(v)=G[V_i]$, which is an empty graph by Definition \ref{defcompmulti}.\par 

Conversely, let $G\setminus \mathcal{N}_{G}(v)$ is empty for each $v\in V(G)$. Choose a vertex $v_1\in V(G)$ and consider the set $V_1=V(G)\setminus \mathcal{N}_{G}(v_1)$. Next choose a vertex $v_{2}\in V(G)\setminus V_1=\mathcal{N}_{G}(v_1)$ and consider the set $V_2=V(G)\setminus \mathcal{N}_{G}(v_2)$. Continuing this process after a finite number of steps, we will get a partition of $V(G)$ as $V_1,\ldots,V_m$. By the given condition, $G[V_i]$ is empty for all $1\leq i\leq m$ and thus, $V_1,\ldots, V_m$ are independent sets of $G$. Now choose two vertices $(v_i\neq)x_i\in V_i$ and $(v_j\neq)x_j\in V_j$ for $i\neq j$. Suppose $\{x_{i},x_{j}\}\not \in E(G)$. Then $G\setminus \mathcal{N}_{G}(x_i)$ is not an empty graph as $\{v_{i},x_{j}\}\in E(G\setminus \mathcal{N}_{G}(x_i))$. This gives a  contradiction, and hence, it follows that $G$ is a complete multipartite graph with the partite sets  $V_1,\ldots,V_m$.
\end{proof}

\begin{theorem}\label{thmvmulti}
Let $G$ be a complete multipartite graph. Then 
$$\mathrm{v}(J(G))=\mathrm{reg}(R/J(G))=\vert V(G)\vert -2.$$
\end{theorem}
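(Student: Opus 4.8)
The plan is to compute both $\mathrm{v}(J(G))$ and $\mathrm{reg}(R/J(G))$ separately and show each equals $|V(G)|-2$. Let $G$ be complete multipartite with partite sets $V_1,\ldots,V_m$ and write $n = |V(G)|$. The key structural fact I would use repeatedly is that, because $G[V_i]$ is empty and all cross-edges are present, the minimal vertex covers of $G$ are precisely the sets of the form $V(G)\setminus V_i$ for a single partite set $V_i$ (one must leave out an entire independent set to avoid leaving an edge uncovered, and one cannot leave out vertices from two different parts without exposing a cross-edge). Thus every minimal vertex cover has cardinality $n - |V_i|$, and the generators of $J(G)$ are exactly $X_{V(G)\setminus V_i}$ for $1\le i\le m$.

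For the $\mathrm{v}$-number, I would apply the formula in Remark~\ref{remvcover}, seeking a minimal-size set $C$ that is not a cover but becomes a cover after adding either endpoint of some edge $\{x,y\}$. The natural candidate is to take $C$ to consist of all vertices except two chosen vertices $x,y$ lying in \emph{different} partite sets, say $x\in V_i$, $y\in V_j$ with $i\ne j$. Then $C = V(G)\setminus\{x,y\}$ has cardinality $n-2$, it fails to cover only the edge $\{x,y\}$, and both $C\cup\{x\}$ and $C\cup\{y\}$ are covers; hence $J(G):X_C = \langle x,y\rangle$ and $\mathrm{v}(J(G))\le n-2$. The reverse inequality $\mathrm{v}(J(G))\ge n-2$ should follow by showing that any admissible $C$ must omit at most two vertices: if $C$ omitted three or more vertices, then (using that covers are complements of single parts) I would argue there is no edge $\{x,y\}$ with both $C\cup\{x\}$ and $C\cup\{y\}$ covers, since adding one vertex can reduce the complement to at most two uncovered vertices only when the omitted set already has size two. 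I would make this precise by noting $C\cup\{x_i\}\in\mathcal{C}_G$ forces $V(G)\setminus(C\cup\{x_i\})$ to be contained in a single part, and run the same for $x_j$.

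For the regularity, I would invoke Terai's formula together with the Auslander--Buchsbaum formula: $\mathrm{reg}(R/J(G)) = \mathrm{pd}(R/I(G)) - 1 = n - \mathrm{depth}(R/I(G)) - 1$. So it suffices to show $\mathrm{depth}(R/I(G)) = 1$, i.e. $\mathrm{pd}(R/I(G)) = n-1$. I expect the cleanest route is to bound $\mathrm{reg}(R/J(G))$ directly via Theorem~\ref{thmregimcol}: the single edge $\{x,y\}$ between two distinct parts gives an induced matching with one edge (when $m=2$ this needs care, but a single cross-edge is always induced since the two endpoints share no common structure forcing extra edges inside $e$), yielding the lower bound, while exhibiting a $2$-collage of $J(G)$'s associated hypergraph whose edge sizes sum correctly to $n-2$ gives the matching upper bound. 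Alternatively, and perhaps more robustly, I would compute $\mathrm{reg}(R/J(G))$ as the regularity of an Alexander dual and show $J(G)$ has a linear resolution precisely when $I(G)$ is, in a suitable sense, well-behaved; but the $2$-collage computation is the concrete target.

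The main obstacle I anticipate is the \emph{lower bound} $\mathrm{v}(J(G))\ge n-2$, which requires a clean combinatorial argument that no set $C$ of size smaller than $n-2$ can be admissible. The subtlety is that one must rule out all edges $\{x,y\}$ simultaneously, using the rigid structure (every minimal cover is a single-part complement) to force that the two sets $C\cup\{x\}$ and $C\cup\{y\}$ being covers pins down $|V(G)\setminus C|=2$ exactly; degenerate small cases such as $m=2$ with unbalanced part sizes should be checked separately to confirm the formula still reads $n-2$.
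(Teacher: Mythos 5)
Your computation of $\mathrm{v}(J(G))$ is correct and is essentially the paper's argument: the minimal vertex covers are exactly the complements $V(G)\setminus V_i$ of single parts, the set $C=V(G)\setminus\{x,y\}$ with $x,y$ in different parts witnesses $\mathrm{v}(J(G))\le n-2$, and the two containments $V(G)\setminus(C\cup\{x\})\subseteq V_j$ and $V(G)\setminus(C\cup\{y\})\subseteq V_i$ intersect to force $V(G)\setminus C=\{x,y\}$, giving the lower bound.

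The regularity half, however, has a genuine gap. Your proposed route via Theorem \ref{thmregimcol} applied to the hypergraph $\mathcal{H}(J(G))$ does not work once $m\ge 3$ and every part has at least two vertices (e.g.\ the octahedron, $m=3$, $|V_i|=2$, $n=6$). First, an edge $\{x,y\}$ of $G$ is not an edge of $\mathcal{H}(J(G))$ — the edges of that hypergraph are the minimal vertex covers $C_i=V(G)\setminus V_i$ — so it cannot serve as an induced matching there. The actual edges $C_i$ pairwise intersect when $m\ge 3$, so the largest induced matching in $\mathcal{H}(J(G))$ is a single edge, yielding only the lower bound $\max_i(|C_i|-1)=n-\min_i|V_i|-1$, which is strictly less than $n-2$ when all parts have size $\ge 2$. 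On the upper side, no $\{C_i\}$ alone is a $2$-collage in that situation (for the octahedron one needs all three $C_i$, giving the useless bound $9$ versus the true value $4$), so the $2$-collage bound does not close to $n-2$ either. Your fallback target, $\mathrm{depth}(R/I(G))=1$, is the right statement but you give no argument for it; this is precisely the nontrivial content. The paper proves $\mathrm{reg}(R/J(G))=n-2$ by induction on $n$, comparing $\mathrm{reg}(R/\langle J(G),x\rangle)=|C_1|-1$ with $\mathrm{reg}(R/(J(G):x))=n-3$ (since $J(G):x=J(G\setminus\{x\})$ and $G\setminus\{x\}$ is again complete multipartite) via the regularity lemma for monomial ideals, and it must treat the case $|V_1|=\cdots=|V_m|=2$ separately by invoking a known non-Cohen--Macaulayness result to pin down $\mathrm{depth}(R/I(G))=1$. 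Some such inductive or depth-theoretic input is needed; the matching/collage bounds alone cannot supply it.
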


\begin{proof}
Let $G$ be a complete multipartite graph with partite sets $V_1,\ldots, V_m$. Since there are $m$ partite sets, it is clear from the structure of multipartite complete graphs that a minimal vertex cover of $G$ is a collection of $m-1$ partite sets and vice versa. First we will show $\mathrm{reg}(R/J(G))=n-2$, where $n=\vert V(G)\vert$ and we proceed by the induction on $n$. We may assume $G$ is non-empty; otherwise, there is nothing to prove. Thus, the base case is $n=2$ and $G\cong K_2$. In this case, $J(G)$ is a prime ideal generated by two variables, and so, $R/J(G)\cong K$. Therefore, $\mathrm{reg}(R/J(G))=0=2-2$. Let $n>2$ and $C_{i}=V_1\cup\cdots\cup V_{i-1}\cup V_{i+1}\cup\cdots \cup V_{m}$ for $1\leq i\leq m$. Then it is clear that $J(G)=\big< X_{C_1},\ldots, X_{C_m}\big>$. Consider a vertex $x\in V_1$. Note that $\big<J(G),x\big>=\big<X_{C_1},x\big>$ and thus, $\mathrm{reg}(R/\big<J(G),x\big>)=\vert C_1 \vert-1$ by Theorem \ref{thmregimcol}. Again, we have $J(G):x= J(G\setminus \{x\})$ and $J(G\setminus \{x\})$ is also a complete multipartite graph on $n-1$ vertices. Then by induction hypothesis, $\mathrm{reg}(R/(J(G):x))=n-3$. Now we will consider two cases:\\
\textbf{Case-I.} When $\vert V_1\vert=\cdots=\vert V_m\vert=2$. In this  case, $\mathrm{ht}(I(G))=n-2$ and thus, $\mathrm{dim}(R/I(G))=2$. By \cite[Corollary 3.3]{ks15}, $R/I(G)$ is not Cohen-Macaulay and thus, $\mathrm{depth}(R/I(G))<2$. Since the maximal ideal generated by $V(G)$ is not an associated prime of $I(G)$, we have $\mathrm{depth}(R/I(G))=1$. It follows from the Auslander-Buchsbaum and Terai's formula that $\mathrm{reg}(R/J(G))=n-2$.\\
\textbf{Case-II.} When $\vert V_{i}\vert\neq 2$ for some $i$. Without loss of generality, let $\vert V_1\vert\neq 2$. Then either $\vert C_{1}\vert=n-1$ (when $\vert V_{1}\vert=1$) or $\vert C_{1}\vert<n-2$ (when $\vert V_{1}\vert >2$). Therefore, $\mathrm{reg}(R/\big<J(G),x_{11}\big>)$ is either $n-2$ or less than $n-3$. Hence, by the regularity lemma for monomial ideals \cite[Theorem 4.7]{chhktt19}, we get $\mathrm{reg}(R/J(G))=n-2$.\par 

Now it remains to show that $\mathrm{v}(J(G))=n-2$. We have an edge $\{x_i,x_j\}\in E(G)$ and a set of vertices $C\subseteq V(G)$ such that $J(G):X_{C}=\big<x_i,x_j\big>$ and $\vert C\vert=\mathrm{v}(J(G))$. Since $\{x_i,x_j\}$ is an edge, $x_i$ and $x_j$ both can not belong to the same partite set. Without loss of generality, let $x_i\in V_{i}$ and $x_j\in V_j$, where $i\neq j$. Since $J(G):X_C=\big<x_i,x_j\big>$, $C$ does not contain any vertex cover of $G$, but $C\cup \{x_{i}\}$ and $C\cup\{x_{j}\}$ contain some minimal vertex covers of $G$. Then it is clear that $x_i,x_j\not\in C$. Now, $x_j\not\in C\cup\{x_i\}$ and $C\cup \{x_i\}$ contain a vertex cover of $G$ together implies $C_{j}\setminus\{x_{i}\}\subseteq C$. Again, $x_i\not\in C\cup\{x_j\}$ and $C\cup \{x_j\}$ contain a vertex cover of $G$ together implies $C_{i}\setminus\{x_{j}\}\subseteq C$. Since $i\neq j$, we have $C_{i}\cup C_{j}=V(G)$ and hence, $C=V(G)\setminus\{x_i,x_j\}$. Therefore, $\mathrm{v}(J(G))=n-2$.
\end{proof}

Now we will establish the relation between the $\mathrm{v}$-number and regularity of cover ideals of graphs. We use the following remark to calculate the depth.

\begin{remark}\label{remdepth}{\rm
Let $I_1\subseteq R_1=K[x_1,\ldots,x_n]$ and $I_2\subseteq R_2=K[y_1,\ldots,y_m]$ be two graded ideals and $R=R_1\otimes_{K} R_2=K[x_1,\ldots,x_n,y_1,\ldots,y_m]$. Then from \cite[Proposition 3.1.33]{vil15} and \cite[Theorem 3.1.34]{vil15}, it follows that
$$\mathrm{depth}(R/(I_1+I_2))=\mathrm{depth}(R_1/I_1)+\mathrm{depth}(R_2/I_2).$$
In particular, $\mathrm{depth}(R/I_1)=\mathrm{depth}(R_1/I_1)+m$.
}
\end{remark}

\begin{theorem}\label{thmcovervreg}
Let $G$ be a simple graph. Then $\mathrm{v}(J(G))\leq \mathrm{reg}(R/J(G))$. In other words, $\mathrm{v}(J(G))\leq \mathrm{pd}(R/I(G))-1$.
\end{theorem}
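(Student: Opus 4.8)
The plan is to prove the equivalent statement $\mathrm{v}(J(G))\leq \mathrm{pd}(R/I(G))-1$, since by Terai's formula $\mathrm{pd}(R/I(G))=\mathrm{reg}(R/J(G))+1$, so the two inequalities are the same. The idea is to produce, from the $\mathrm{v}$-number, a combinatorial configuration in $G$ that forces the projective dimension to be large, and the natural tool is the Auslander--Buchsbaum formula together with \Cref{remdepth} on depth of sums of ideals in disjoint variable sets.

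First I would invoke \Cref{remvcover}: choose a set $C\subseteq V(G)$ with $C\notin\mathcal{C}_G$ realizing $\mathrm{v}(J(G))=\vert C\vert$, together with an edge $\{x_i,x_j\}$ such that $C\cup\{x_i\}$ and $C\cup\{x_j\}$ are both vertex covers. The key structural observation is that since $C$ covers every edge except $\{x_i,x_j\}$ (that is the only edge missed, because adding either endpoint makes it a cover), the vertices of $C$ together with $x_i,x_j$ account for all edges, and moreover no vertex of $C$ is adjacent to both... more precisely I want to extract from $C$ an induced structure. The cleanest route is to pass to the edge ideal and bound $\mathrm{pd}(R/I(G))$ from below by $\vert C\vert +1$. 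I would set $W=C\cup\{x_i,x_j\}$ and examine the induced subgraph $G[W]$, or rather work with the complement $V(G)\setminus W$; the vertices outside $W$ should contribute free generators of the resolution. The mechanism I expect to use is: $\mathrm{pd}(R/I(G))\geq \mathrm{pd}(R'/I(G[W]))$ where $R'=K[W]$ is not quite right since projective dimension does not behave monotonically under induced subgraphs in general, so instead I would control depth directly. Using Auslander--Buchsbaum, $\mathrm{pd}(R/I(G))=n-\mathrm{depth}(R/I(G))$, so it suffices to show $\mathrm{depth}(R/I(G))\leq n-\vert C\vert -1$.

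To bound the depth I would use that $C$ is, combinatorially, almost a minimal cover: the set $C$ together with exactly one of $x_i,x_j$ forms a vertex cover, so $A:=V(G)\setminus(C\cup\{x_i,x_j\})$ is an independent set, and in fact $\mathcal{N}_G(A)\subseteq C$. This is exactly the situation of $A\in\mathcal{A}_{\mathcal{H}}$-type analysis used for edge ideals: an independent set $A$ whose neighbourhood is a vertex cover of a localized graph. I expect to build an associated prime of large height, or equivalently to localize/colon by $X_A$ and use that $I(G):X_A$ has the right form to pin down an upper bound on depth. Concretely I would try to show the monomial $X_A$ witnesses $\mathrm{depth}\leq \vert A\vert = n-\vert C\vert -2$, or adjust by one, so that Auslander--Buchsbaum gives $\mathrm{pd}(R/I(G))\geq \vert C\vert +2$, which is even stronger than needed; I would then reconcile the off-by-one by checking whether $x_i$ or $x_j$ must be included.

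The main obstacle I anticipate is the passage from the existence of the set $C$ (a $\mathrm{v}$-number witness for the cover ideal) to a genuine depth bound for $R/I(G)$; the combinatorics of ``$C$ covers all edges but one'' must be translated into a statement guaranteeing a minimal generator of the top of the resolution, and the cleanest version may require splitting $R/I(G)$ as a tensor product over disjoint variable sets to apply \Cref{remdepth}. The danger is that the independent set $A=V(G)\setminus(C\cup\{x_i,x_j\})$ need not be large, so the depth bound could be weak; I would therefore look for a \emph{maximal} such $C$ or argue that one may enlarge $C$ without increasing the $\mathrm{v}$-number beyond control, and combine this with the regularity lower bound $\mathrm{reg}(R/J(G))\geq\mathrm{bight}(I(G))-1$ from \Cref{thmregimcol} already used in \Cref{propbght}. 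Reconciling these two inequalities into the single clean bound $\mathrm{v}(J(G))\leq\mathrm{pd}(R/I(G))-1$ is where I expect the real work to lie.
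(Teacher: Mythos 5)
Your reformulation of the target is correct ($\mathrm{v}(J(G))\le\mathrm{pd}(R/I(G))-1$ is, via Terai and Auslander--Buchsbaum, the depth bound $\mathrm{depth}(R/I(G))\le n-\mathrm{v}(J(G))-1$), and the tools you name (colon ideals, the depth monotonicity $\mathrm{depth}(R/I)\le\mathrm{depth}(R/(I:f))$ from \cite[Corollary 1.3]{rauf10}, Remark~\ref{remdepth}) are the right ones; but the central step is missing and, in the direction you set it up, cannot be completed. Starting from a minimum witness $C$ with $J(G):X_C=\langle x_i,x_j\rangle$ and putting $A=V(G)\setminus(C\cup\{x_i,x_j\})$, the colon computation gives $I(G):X_A=\langle\mathcal{N}_G(A)\rangle+I(H')$ with $H'=G\setminus\mathcal{N}_G[A]$ supported on $\{x_i,x_j\}\cup(C\setminus\mathcal{N}_G(A))$, hence $\mathrm{depth}(R/I(G))\le|A|+\mathrm{depth}(K[V(H')]/I(H'))$. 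Since $|A|=n-|C|-2$, you would need $\mathrm{depth}(K[V(H')]/I(H'))\le 1$, and nothing forces this: $C\setminus\mathcal{N}_G(A)$ need not be empty, and in the extreme case $A=\emptyset$ --- which genuinely occurs, since for a complete multipartite graph the only witness is $C=V(G)\setminus\{x_i,x_j\}$, so $H'=G$ --- the required inequality is literally the statement being proved, i.e.\ the argument is circular. Neither of your proposed repairs closes this: enlarging $C$ only pushes $|C|$ past $\mathrm{v}(J(G))$, and the lower bound $\mathrm{reg}(R/J(G))\ge\mathrm{bight}(I(G))-1$ is unavailable precisely in the problematic case, because for complete multipartite graphs $\mathrm{v}(J(G))=n-2>\mathrm{bight}(I(G))-1$ in general (Theorem~\ref{thmvmulti}, Remark~\ref{remq}(c)).

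The paper runs the argument in the opposite direction, by induction on $n$. Complete multipartite graphs are handled first and separately (Theorem~\ref{thmvmulti}); otherwise Lemma~\ref{lemcompmulti} supplies a vertex $x$ for which $H=G\setminus\mathcal{N}_G[x]$ is non-empty. Then $I(G):x=\langle\mathcal{N}_G(x)\rangle+I(H)$ together with $\mathrm{depth}(R/I(G))\le\mathrm{depth}(R/(I(G):x))$ yields $\mathrm{reg}(R/J(G))\ge|\mathcal{N}_G(x)|+\mathrm{reg}(K[V(H)]/J(H))$, while a witness $C$ for $J(H)$ lifts to the witness $C\cup\mathcal{N}_G(x)$ for $J(G)$, giving $\mathrm{v}(J(G))\le|\mathcal{N}_G(x)|+\mathrm{v}(J(H))$; the induction hypothesis applied to $H$ closes the loop. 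The idea you are missing is that the witness should be constructed rather than analyzed: one builds a good $C$ for $G$ from a strictly smaller graph instead of trying to extract a depth bound from an arbitrary optimal $C$, which by itself carries too little information about $\mathrm{depth}(R/I(G))$.
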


\begin{proof}
We will proceed by induction on the number of vertices $n$ of $G$. We can omit the case when $G$ is empty. If $n=2$, then the graph has only one edge $\{x_1,x_2\}$ and $J(G)=\big<x_1,x_2\big>$ itself is a prime ideal. Thus, $\mathrm{v}(J(G))=0$ and $R/J(G))$ being isomorphic to $K$, we have $\mathrm{reg}(R/J(G))=0$. If $n=3$ and $E(G)=\{\{x_1,x_2\},\{x_2,x_3\}\}$, then $J(G)=\big<x_2, x_1x_3\big>$. In this case, $\{x_2\}$ and $\{x_1,x_3\}$ forms an induced matching as well as a $2$-collage in $\mathcal{H}(J(G))$. Therefore, by Theorem \ref{thmregimcol}, $\mathrm{reg}(R/J(G))=1$. Again $J(G)$ is not a prime ideal and $J(G):x_1=\big<x_2,x_3\big>$ gives $\mathrm{v}(J(G))=1$. Note that if $n=3$ and $G$ is a $K_3$, then $J(G)=I(G)$. In this case, it is easy to observe that $\mathrm{v}(J(G))=\mathrm{reg}(R/J(G))=1$. Now, let us assume $n>3$. Due to Theorem \ref{thmvmulti}, we may assume that $G$ is not a complete multipartite graph. Then by Lemma \ref{lemcompmulti}, there exists a vertex $x\in V(G)$ such that $G\setminus \mathcal{N}_{G}(x)$ is not an empty graph. Let $\mathcal{N}_{G}(x)=\{x_1,\ldots,x_k\}$ and $H=G\setminus \mathcal{N}_{G}[x]$. Then $I(G):x=\big<x_1,\ldots,x_k\big>+I(H)$. If $A=K[V(H)]$, then using Remark \ref{remdepth}, we have
\begin{align*}
\mathrm{depth}(R/(I(G):x))&=\mathrm{depth}(K[x])+\mathrm{depth}(A/I(H))\\
&=1+\mathrm{depth}(A/I(H)).
\end{align*}
Using the Auslander-Buchsbaum formula, we get
$$\mathrm{pd}(R/(I(G):x))=k+\mathrm{pd}(A/I(H)),$$
and applying Terai's formula, we have
$$\mathrm{reg}(R/(I(G):x)^{\vee})=k+\mathrm{reg}(A/J(H)).$$
Now $\mathrm{depth}(R/I(G))\leq \mathrm{depth}(R/(I(G):x))$ by \cite[Corollary 1.3]{rauf10} and again using the Auslander-Buchsbaum and Terai's formula, we get
\begin{equation}\label{eq1}
\mathrm{reg}(R/J(G))\geq \mathrm{reg}(R/(I(G):x)^{\vee})=k+\mathrm{reg}(A/J(H)).
\end{equation} 
Now, by induction hypothesis, $\mathrm{v}(J(H))\leq \mathrm{reg}(A/J(H))$. Since $H$ is non-empty, there exists an edge $\{u,v\}\in E(H)$ and $C\subseteq V(H)$ such that $J(H):X_C=\big<u,v\big>$ and $\vert C\vert=\mathrm{v}(J(H))$. Then $C\cup\{u\}$ and $C\cup\{v\}$ each contains at least a minimal vertex cover of $H$. Note that if $C'$ is a minimal vertex cover of $H$, then $C'\cup\{x_1,\ldots,x_k\}$ will be a minimal vertex cover of $G$. Therefore, each of $C\cup\{x_1,\ldots,x_k,u\}$ and $C\cup\{x_1,\ldots,x_k,v\}$ also will contain at least a minimal vertex cover of $G$. Again, $C\cup\{x_1,\ldots,x_k\}$ can not be a vertex cover of $G$ as $C$ is not a vertex cover of $H$. Hence, by Remark \ref{remvcover}, we have $J(G):X_{C\cup\{x_1,\ldots,x_k\}}=\big<u,v\big>$ and this gives $\mathrm{v}(J(G))\leq k+\mathrm{v}(J(H))$. Thus, by the induction hypothesis and the inequality (\ref{eq1}), we get $\mathrm{v}(J(G))\leq \mathrm{reg}(R/J(G))$.
\end{proof}

As a consequence of Theorem \ref{thmcovervreg}, we get the following relation between the Cohen-Macaulay property of edge ideals of graphs and the $\mathrm{v}$-number of their cover ideals.

\begin{corollary}\label{corcmv}
Let $G$ be a simple graph. Then $R/I(G)$ is Cohen-Macaulay if and only if $\mathrm{v}(J(G))=\mathrm{reg}(R/J(G))=\alpha_{0}(G)-1$.
\end{corollary}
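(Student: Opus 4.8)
The plan is to translate the Cohen-Macaulay condition into a numerical equality about $\mathrm{pd}(R/I(G))$ by means of the Auslander-Buchsbaum and Terai formulas, and then to trap the $\mathrm{v}$-number between $\alpha_{0}(G)-1$ and $\mathrm{reg}(R/J(G))$ so that the forced inequalities collapse to equalities exactly in the Cohen-Macaulay case. Write $n=\vert V(G)\vert$. Since $\mathrm{dim}(R/I(G))=n-\alpha_{0}(G)$ and one always has $\mathrm{depth}(R/I(G))\leq \mathrm{dim}(R/I(G))$ with equality precisely when $R/I(G)$ is Cohen-Macaulay, the Auslander-Buchsbaum formula gives $\mathrm{pd}(R/I(G))=n-\mathrm{depth}(R/I(G))\geq \alpha_{0}(G)$, with equality if and only if $R/I(G)$ is Cohen-Macaulay. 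Feeding this into Terai's formula $\mathrm{pd}(R/I(G))=\mathrm{reg}(R/J(G))+1$ yields
$$\mathrm{reg}(R/J(G))\geq \alpha_{0}(G)-1,$$
again with equality if and only if $R/I(G)$ is Cohen-Macaulay. This already disposes of the regularity half of the statement.

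The second ingredient I would isolate is a lower bound for the $\mathrm{v}$-number valid for \emph{every} graph, namely $\mathrm{v}(J(G))\geq \alpha_{0}(G)-1$. Using the description of $\mathrm{v}(J(G))$ in Remark \ref{remvcover}, choose a witness $C\subseteq V(G)$ with $\vert C\vert=\mathrm{v}(J(G))$ that is not a vertex cover but for which $C\cup\{x_i\}$ is a vertex cover for some vertex $x_i$. Because $C$ itself fails to be a cover while $C\cup\{x_i\}$ is one, necessarily $x_i\notin C$, so $\vert C\cup\{x_i\}\vert=\vert C\vert+1$. Since every vertex cover of $G$ has at least $\alpha_{0}(G)$ vertices, this forces $\vert C\vert+1\geq \alpha_{0}(G)$, that is, $\mathrm{v}(J(G))\geq \alpha_{0}(G)-1$. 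Combining this with the inequality $\mathrm{v}(J(G))\leq \mathrm{reg}(R/J(G))$ from Theorem \ref{thmcovervreg} produces the chain
$$\alpha_{0}(G)-1\leq \mathrm{v}(J(G))\leq \mathrm{reg}(R/J(G)).$$

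Finally I would assemble the two directions. For the forward implication, assume $R/I(G)$ is Cohen-Macaulay; then the first paragraph gives $\mathrm{reg}(R/J(G))=\alpha_{0}(G)-1$, and the displayed chain collapses to $\mathrm{v}(J(G))=\mathrm{reg}(R/J(G))=\alpha_{0}(G)-1$. For the converse, the hypothesis $\mathrm{v}(J(G))=\mathrm{reg}(R/J(G))=\alpha_{0}(G)-1$ contains in particular the equality $\mathrm{reg}(R/J(G))=\alpha_{0}(G)-1$, which by the equality criterion of the first paragraph is exactly the Cohen-Macaulayness of $R/I(G)$. I do not anticipate a genuine obstacle here: the heavy lifting is already contained in Theorem \ref{thmcovervreg}, and the only new observation needed is the elementary universal lower bound $\mathrm{v}(J(G))\geq\alpha_{0}(G)-1$, with everything else being bookkeeping via the Auslander-Buchsbaum and Terai formulas. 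The one point worth stating carefully is that the converse uses only the regularity equality and not the full hypothesis, so the $\mathrm{v}$-number equality in the statement is automatically forced rather than separately invoked.
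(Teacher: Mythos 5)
Your proof is correct and follows essentially the same route as the paper: both directions reduce to the equivalence ``$R/I(G)$ Cohen--Macaulay $\iff \mathrm{reg}(R/J(G))=\alpha_{0}(G)-1$'' via the Auslander--Buchsbaum and Terai formulas, combined with the sandwich $\alpha_{0}(G)-1\leq \mathrm{v}(J(G))\leq \mathrm{reg}(R/J(G))$ from Theorem \ref{thmcovervreg}. The only cosmetic difference is that you prove the lower bound $\mathrm{v}(J(G))\geq \alpha_{0}(G)-1$ directly from Remark \ref{remvcover}, whereas the paper cites Lemma 3.16 of \cite{vedge} for it.
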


\begin{proof}
Let $R/I(G)$ is Cohen-Macaulay. Then we have
$$\mathrm{depth}(R/I(G))=\mathrm{dim}(R/I(G))=n-\mathrm{ht}(I(G))=n-\alpha_{0}(G).$$
We use the Auslander-Buchsbaum and Terai's formula to get $\mathrm{reg}(R/J(G))=\alpha_{0}(G)-1$. Hence, by Theorem \ref{thmcovervreg} and \cite[Lemma 3.16]{vedge}, we have $\mathrm{v}(J(G))=\alpha_{0}(G)-1$.\par 
Conversely, let $\mathrm{reg}(R/J(G))=\alpha_{0}(G)-1$, which imply $\mathrm{v}(J(G))=\alpha_{0}(G)-1$ due to Theorem \ref{thmcovervreg} and \cite[Lemma 3.16]{vedge}. Then using the Auslander-Buchsbaum and Terai's formula, we get $\mathrm{depth}(R/I(G))=n-\alpha_{0}(G)$, which is equal to $\mathrm{dim}(R/I(G))$ as $\mathrm{ht}(I(G))=\alpha_{0}(G)$. Hence, $R/I(G)$ is Cohen-Macaulay.
\end{proof}

 \begin{figure}[H]
	\centering
\begin{tikzpicture}
[scale=1.5,auto=left,every node/.style={circle,scale=1}]

\filldraw[black] (0,0.7) circle (2pt)node[anchor=south]{$x_1$};
\filldraw[black] (1,1.5) circle (2pt)node[anchor=west]{$x_2$};
\filldraw[black] (-1,1.5) circle (2pt)node[anchor=east]{$x_7$};
\filldraw[black] (1,2.5) circle (2pt)node[anchor=west]{$x_3$};
\filldraw[black] (-1,2.5) circle (2pt)node[anchor=east]{$x_6$};
\filldraw[black] (0.5,3.2) circle (2pt)node[anchor=south]{$x_4$};
\filldraw[black] (-0.5,3.2) circle (2pt)node[anchor=south]{$x_5$};

\filldraw[black] (-1.4,0.8) circle (2pt)node[anchor=south]{$x_8$};
\filldraw[black] (-0.5,-0.7) circle (2pt)node[anchor=north]{$x_{13}$};
\filldraw[black] (-2.2,0.2) circle (2pt)node[anchor=south]{$x_9$};
\filldraw[black] (-1.3,-1.2) circle (2pt)node[anchor=north]{$x_{12}$};
\filldraw[black] (-2.5,-0.5) circle (2pt)node[anchor=east]{$x_{10}$};
\filldraw[black] (-2.1,-1.2) circle (2pt)node[anchor=north]{$x_{11}$};

\filldraw[black] (1.4,0.8) circle (2pt)node[anchor=south]{$x_{14}$};
\filldraw[black] (0.5,-0.7) circle (2pt)node[anchor=north]{$x_{19}$};
\filldraw[black] (2.2,0.2) circle (2pt)node[anchor=south]{$x_{15}$};
\filldraw[black] (1.3,-1.2) circle (2pt)node[anchor=north]{$x_{18}$};
\filldraw[black] (2.5,-0.5) circle (2pt)node[anchor=west]{$x_{16}$};
\filldraw[black] (2.1,-1.2) circle (2pt)node[anchor=north]{$x_{17}$};

\draw[black] (0,0.7) -- (1,1.5) -- (1,2.5) -- (0.5,3.2) -- (-0.5,3.2) -- (-1,2.5) -- (-1,1.5) -- cycle;

\draw[black] (0,0.7) -- (-1.4,0.8) -- (-2.2,0.2) -- (-2.5,-0.5) -- (-2.1,-1.2) -- (-1.3,-1.2) -- (-0.5,-0.7) -- cycle;

\draw[black] (0,0.7) -- (1.4,0.8) -- (2.2,0.2) -- (2.5,-0.5) -- (2.1,-1.2) -- (1.3,-1.2) -- (0.5,-0.7) -- cycle;

\end{tikzpicture}
\caption{Graph $G_3$ such that $\mathrm{reg}(R/J(G_3))-\mathrm{v}(J(G))=3$.}\label{fig1}
\end{figure}

From Theorem \ref{thmcovervreg}, we see that $\mathrm{v}(J(G))\leq \mathrm{reg}(R/J(G))$ for any graph $G$. Now the natural question that arises is how larger can be the regularity of the cover ideal of a connected graph than the $\mathrm{v}$-number of that cover ideal. The answer to this question is provided in the following Theorem \ref{thmreg-v=k}.
\begin{theorem}\label{thmreg-v=k}
For every positive integer $k$ there exists a connected graph $G_k$ such that $\mathrm{reg}(R/J(G_k))-\mathrm{v}(J(G_k))=k$.
\end{theorem}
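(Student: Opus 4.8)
The plan is to take $G_k$ to be the one‑point union of $k$ copies of the $7$‑cycle, all glued along a single common vertex $x_1$; for $k=3$ this is exactly the graph of Figure \ref{fig1}. This graph is connected and has $n=6k+1$ vertices. I would prove $\mathrm{v}(J(G_k))=3k$ and $\mathrm{reg}(R/J(G_k))=4k$, so that the difference is $k$.

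First I would compute the $\mathrm{v}$‑number combinatorially. By Remark \ref{remvcover}, a set $C$ contributes to $\mathrm{v}(J(G_k))$ precisely when its complement $S=V(G_k)\setminus C$ induces exactly one edge of $G_k$ (indeed $C\cup\{x_i\},\,C\cup\{x_j\}$ are covers while $C$ is not iff the only edge with both endpoints outside $C$ is $\{x_i,x_j\}$). Hence $\mathrm{v}(J(G_k))=n-\max\{\,|S|:G_k[S]\text{ has exactly one edge}\,\}$. Deleting one endpoint of that single edge leaves an independent set, so $|S|\le \alpha(G_k)+1$, where $\alpha(G_k)$ is the independence number; a short case analysis (according to whether the hub $x_1$ lies in the independent set) gives $\alpha(G_k)=3k$, attained by choosing three vertices avoiding $x_1$ in each $C_7$. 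Exhibiting one cycle contributing a $4$‑element set inducing a single edge and each other cycle contributing a maximal independent set of size $3$ produces an $S$ with $|S|=3k+1$, so the bound is sharp and $\mathrm{v}(J(G_k))=3k$.

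Next I would compute the regularity through Terai's formula, $\mathrm{reg}(R/J(G_k))=\mathrm{pd}(R/I(G_k))-1$, by showing $\mathrm{pd}(R/I(G_k))=4k+1$, equivalently $\mathrm{depth}(R/I(G_k))=2k$. For $\mathrm{depth}\ge 2k$ I would use the short exact sequence attached to $x_1$: here $I(G_k):x_1=\langle \mathcal N_{G_k}(x_1)\rangle+I(k\cdot P_3)$ and $(I(G_k),x_1)=\langle x_1\rangle+I(k\cdot P_5)$, where $k\cdot P_3$ and $k\cdot P_5$ denote $k$ disjoint paths on $4$ and $6$ vertices (the pieces left after removing $\mathcal N_{G_k}[x_1]$ and $x_1$ respectively). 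Using the additivity of depth from Remark \ref{remdepth} together with $\mathrm{depth}(R/I(P_3))=\mathrm{depth}(R/I(P_5))=2$ gives depths $2k+1$ and $2k$ for the two quotients, and the depth lemma yields $\mathrm{depth}(R/I(G_k))\ge 2k$, i.e. $\mathrm{pd}(R/I(G_k))\le 4k+1$.

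The reverse inequality $\mathrm{pd}(R/I(G_k))\ge 4k+1$ is the crux, and is where purely algebraic depth estimates fall short: the inequality $\mathrm{depth}(R/I)\le \mathrm{depth}(R/(I:x))$ and the depth lemma only sandwich the depth between $2k$ and $2k+1$, so a finer argument is needed to close this gap of one. I would close it topologically, via the independence complex $\mathrm{Ind}(G_k)$ (whose faces are the independent sets of $G_k$). Deleting $x_1$ leaves $k$ disjoint copies of $P_5$, so $\mathrm{Ind}(G_k\setminus x_1)\simeq \mathrm{Ind}(P_5)^{*k}\simeq (S^1)^{*k}=S^{2k-1}$, while $\mathrm{Ind}(G_k\setminus \mathcal N_{G_k}[x_1])$ is a join of independence complexes of paths on four vertices, each contractible, hence contractible. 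Applying Mayer--Vietoris to $\mathrm{Ind}(G_k)=\mathrm{Ind}(G_k\setminus x_1)\cup\bigl(x_1*\mathrm{Ind}(G_k\setminus \mathcal N_{G_k}[x_1])\bigr)$ then gives $\widetilde H_{2k-1}(\mathrm{Ind}(G_k);K)\neq 0$, and Hochster's formula converts this nonvanishing (in the full vertex support) into $\beta_{4k+1}(R/I(G_k))\neq 0$, so $\mathrm{pd}(R/I(G_k))\ge 4k+1$. Combining the two bounds gives $\mathrm{pd}(R/I(G_k))=4k+1$, whence $\mathrm{reg}(R/J(G_k))=4k$ and $\mathrm{reg}(R/J(G_k))-\mathrm{v}(J(G_k))=k$. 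The main obstacle, as flagged, is precisely the projective‑dimension lower bound, which forces the passage from depth estimates to the homology of the independence complex.
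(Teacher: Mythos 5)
Your proposal is correct and uses the same graph $G_k$ (the one-point union of $k$ seven-cycles) with the same target values $\mathrm{v}(J(G_k))=3k$ and $\mathrm{reg}(R/J(G_k))=4k$, but the key step is handled by a genuinely different method. For the $\mathrm{v}$-number, your reformulation via sets $S$ inducing exactly one edge and the bound $|S|\le\alpha(G_k)+1$ is a complementary rephrasing of the paper's argument, which instead invokes the general inequality $\mathrm{v}(J(G))\ge\alpha_0(G)-1$ from \cite[Lemma 3.16]{vedge} and exhibits the same kind of witness set; both are fine. For the depth lower bound $\mathrm{depth}(R/I(G_k))\ge 2k$ you and the paper use the identical decomposition at the hub vertex. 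The real divergence is the bound $\mathrm{pd}(R/I(G_k))\ge 4k+1$: you obtain it topologically, computing $\widetilde H_{2k-1}(\mathrm{Ind}(G_k);K)\ne 0$ via joins of independence complexes of paths and Mayer--Vietoris, then applying Hochster's formula. The paper instead stays algebraic: it colons by the monomial $x_{k3}x_{k6}$, which strips off an entire $C_7$ and yields $I(G_k):x_{k3}x_{k6}=\big<x_{k2},x_{k4},x_{k5},x_{k7}\big>+I(G_{k-1})$, so that $\mathrm{depth}(R/I(G_k))\le\mathrm{depth}(R/(I(G_k):x_{k3}x_{k6}))=2k$ by \cite[Corollary 1.3]{rauf10}, Remark \ref{remdepth}, and induction on $k$. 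So your flagged claim that ``purely algebraic depth estimates fall short'' is not accurate --- coloning by a single vertex only sandwiches the depth in $\{2k,2k+1\}$, but coloning by a well-chosen degree-two monomial closes the gap. Your topological route is more self-contained (no appeal to the induction on $k$ for the upper bound) and makes the homological reason for $\mathrm{pd}=4k+1$ transparent, at the cost of importing Hochster's formula and the homotopy types of $\mathrm{Ind}(P_n)$; the paper's route is shorter given the cited depth lemmas. Both are complete proofs.
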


\begin{proof}
Let us consider $k$ copies of $C_{7}$ with vertex sets $V_{i}=\{x_{i1},\ldots,x_{i7}\}$ and edge sets $E_{i}=\{\{x_{ij},x_{ij+1}\}\mid 1\leq j\leq 6\}\cup\{\{x_{i1},x_{i7}\}\}$ for $1\leq i\leq k$. Now by gluing/identifying the vertices $x_{11},\ldots,x_{k1}$ and naming it as $x$, we get a new graph $G_k$, i.e., $G_k$ is a connected graph such that $k$ cycles of length seven are attached at a fixed vertex, and we rename that fixed vertex as $x$ (see Figure \ref{fig1}). We first prove the following claim using induction on $k$.\smallskip

\noindent\textbf{Claim.} $\mathrm{depth}(R/I(G_{k}))=2k$.\smallskip

\noindent \textit{Proof of the claim.} Let $k=1$. Then the graph $G_1$ is a cycle of length $7$. Therefore, $\mathrm{depth}(R/I(G_1))=2$ by \cite[Proposition 1.3]{cimpoeas15}. Now let us assume $k>1$. Then it is easy to observe that $I(G_k):x_{k3}x_{k6}=\big<x_{k2},x_{k4},x_{k5},x_{k7}\big>+I(G_{k-1})$. Using the induction hypothesis and Remark \ref{remdepth}, we get $\mathrm{depth}(R/(I(G_k):x_{k3}x_{k6}))=2+2(k-1)=2k$. Since $x_{k3}x_{k6}\not\in I(G_k)$, by \cite[Corollary 1.3]{rauf10}, we have $\mathrm{depth}(R/I(G_k))\leq 2k$. Considering the vertex $x$ and using the Auslander-Buchsbaum formula, from \cite[Lemma 5.1]{dhs13}, we get the following relation: 
\begin{equation}\label{eq2}
\mathrm{depth}(R/I(G_k))\in \{\mathrm{depth}(R/\big<I(G_k),x\big>), \mathrm{depth}(R/(I(G_k):x))\}.
\end{equation}
Now $\big<I(G_k),x\big>=\big<I(G_{k}\setminus \{x\}),x\big>$ and $G_{k}\setminus \{x\}$ is a disjoint union of $k$ number of $P_{5}$, path graph of length $5$. Therefore, $\mathrm{depth}(R/\big<I(G_k),x\big>)=2k$ by \cite[Lemma 2.8]{morey10} and Remark \ref{remdepth}. On the other hand $I(G_k):x=\big<\{x_{i2},x_{i7}\mid 1\leq i\leq k\}\big>+I(G_{k}\setminus \mathcal{N}_{G_{k}}[x])$. Note that $G_{k}\setminus \mathcal{N}_{G_{k}}[x]$ is a disjoint union of $k$ number of $P_3$, path graph of length $3$. Again from \cite[Lemma 2.8]{morey10} and Remark \ref{remdepth}, it follows that $\mathrm{depth}(R/(I(G_k):x))=2k+1$. Since $\mathrm{depth}(R/I(G_k))\leq 2k$, we have $\mathrm{depth}(R/I(G_k))= 2k$ by (\ref{eq2}). Now due to the Auslander-Buchsbaum and Terai's formula, we obtain $$\mathrm{reg}(R/J(G_k))=(6k+1)-2k-1=4k.$$ 
Now we will show that $\mathrm{v}(J(G_k))=3k$. Note that any minimal vertex cover of a $C_7$ consists of exactly four vertices, and out of those four vertices, two must be adjacent. Let $C$ be a minimal vertex cover of $G_k$ with minimum size. Then $C$ contains four vertices from each cycle to cover $G_k$. If $x\not\in C$, then $\vert C\vert=4k$ and if $x\in C$, then $\vert C\vert= 3k+1$. Since $C$ is the minimal vertex cover with minimum cardinality, $x$ must belong to $C$ for $k>1$. Let us consider the following set of vertices $C$ of $G_k$:
$$C=\{x\}\cup \{x_{ij}\mid  i\in\{1,\ldots,k\}\,\,\text{and}\,\, j\in\{3,4,6\}\}.$$
Then $C$ covers all the edges of $G_k$ and $\vert C\vert=3k+1$. Therefore, $C$ is a minimal vertex cover of $G_{k}$ with minimum cardinality. Let $C'=C\setminus \{x_{14}\}$. Then $C'$ is not a vertex cover of $G_k$ as $x_{14},x_{15}\not\in C'$, but $C'\cup \{x_{15}\}$ is a minimal vertex cover of $G_k$. Since $C'$ is not a vertex cover of $G_k$ and both of $C'\cup\{x_{14}\}$ and $C'\cup\{x_{15}\}$ are minimal vertex cover of $G_k$, we have $J(G):X_{C'}=\big<x_{14},x_{15}\big>$. Thus, $\mathrm{v}(J(G_k))\leq \vert C'\vert=\vert C\vert -1=\mathrm{ht}(I(G_k))-1$. Now \cite[Lemma 3.16]{vedge} tells that $\mathrm{v}(J(G_k))\geq \mathrm{ht}(I(G_k))-1$ and thus, $\mathrm{v}(J(G_k))=\mathrm{ht}(I(G_k))-1=3k$. Therefore, we have $\mathrm{reg}(R/J(G_k))-\mathrm{v}(J(G_k))=k$.
\end{proof}

\begin{remark}\label{remq}{\rm
Note that in the if part of Corollary \ref{corcmv}, it is assumed that $\mathrm{v}(J(G))=\mathrm{reg}(R/J(G))=\alpha_{0}(G)-1$. But, if we only assume $\mathrm{v}(J(G))=\alpha_{0}(G)-1$ or $\mathrm{v}(J(G))=\mathrm{reg}(R/J(G))$, then $R/J(G)$ may not be Cohen-Macaulay. 
 
\begin{enumerate}[(a)]
\item For example, if we consider $C_7$, then $\mathrm{v}(J(C_7))=3=\alpha_{0}(C_7)-1$, but $R/I(C_7)$ is not Cohen-Macaulay as $\mathrm{depth}(R/I(C_7))=2$ and $\mathrm{dim}(R/I(C_7))=3$.

\item Again, from Theorem \ref{thmvmulti}, we can get infinite class of graphs for which $\mathrm{v}(J(G))=\mathrm{reg}(R/J(G))$, but $R/I(G)$ is not Cohen-Macaulay.

\item Also, contrast to Corollary \ref{corcmv}, one can ask whether $\mathrm{v}(J(G))= \alpha_{0}(G)-1$ when $I(G)$ is unmixed. The answer is no, because if we consider a multipartite graph $G$ with all the partite sets having the same cardinality but greater than $1$, then their corresponding edge ideals will be unmixed, and by Theorem \ref{thmvmulti}, we can see that $\mathrm{v}(J(G))>\alpha_{0}(G)-1$. 
\end{enumerate}
}
\end{remark}

From Proposition \ref{propbght}, we get a large class of graphs (including chordal) for which the $\mathrm{v}$-numbers of their cover ideals are strictly less than the big heights of their corresponding edge ideals. Also, from Theorem \ref{thmvmulti}, we get an infinite class of multipartite graphs $G$ for which $\mathrm{v}(J(G))>\mathrm{bight}(I(G))-1$ holds. Nevertheless, computing many different classes of graphs that do not satisfy the condition of Proposition \ref{propbght} and do not belong to the class of complete multipartite graphs, we witness that $\mathrm{v}(J(G))\leq \mathrm{bight}(I(G))-1$ holds. For example, if we take any cycle $C_n$ with $n\neq 4$, then $\mathrm{v}(J(C_n))\leq \mathrm{bight}(I(C_n))-1$ by Proposition \ref{propvcycle}. Therefore, the following question naturally arises:

\begin{question}\label{q1}{\rm
Does there exist a graph $G$ such that $G$ is not a complete multipartite graph, but $\mathrm{v}(J(G))>\mathrm{bight}(I(G))-1$?
}
\end{question}

\section*{Acknowledgment}
The author would like to thank the National Board for Higher Mathematics (India) for the financial support through NBHM Postdoctoral Fellowship.

\printbibliography

\end{document}